\newcommand\CO{\mathbb{C}}
\newcommand\RE{\mathbb{R}}
\newcommand\ZA{\mathbb{Z}}
\newcommand\GL{\operatorname{GL}}
\newcommand\Tr{\operatorname{Tr}}
\newcommand\Str{\operatorname{Str}}
\newcommand\str{\operatorname{str}}
\newcommand\End{{\mathrm{End}}}
\newcommand\Det{\operatorname{Det}}
\newcommand\id{\operatorname{id}}
\newcommand\res{\operatorname{res}}
\newcommand\im{\operatorname{im}}
\newcommand\D{{\EuScript{D}}}
\newcommand\E{{\EuScript{E}}}
\newcommand\F{{\EuScript{F}}}
\newcommand\LL{{\EuScript{L}}}
\newcommand\OO{{\EuScript{O}}}
\newcommand\al{\alpha}
\newcommand\gam{\gamma}
\newcommand\lam{\lambda}
\newcommand\Gam{\mathit{\Gamma}}
\newcommand\PI{\mathit{\Pi}}
\newcommand\Om{\mathit{\Omega}}
\newcommand\bra{\langle}
\newcommand\ket{\rangle}
\newcommand\medwedge{\mbox{\fontsize{12pt}{0pt}\selectfont $\wedge$}}
\newcommand\dbar{\bar\partial}
\newcommand\ii{\sqrt{-1}}
\newcommand\Nab{\mathbb{A}}
\newcommand\A{{A}}
\theoremstyle{plain}
\newtheorem{theorem}{Theorem}[section]
\newtheorem{thm}[theorem]{Theorem}
\newtheorem{prop}[theorem]{Proposition}
\newtheorem{cor}[theorem]{Corollary}
\theoremstyle{definition}
\newtheorem{defn}[theorem]{Definition}
\begin{document}

  \begin{flushright}
{\tt arXiv:1001.3212v2[math.DG]}\\
revised: April, 2010
  \end{flushright}

\vspace{1cm} 

\title[Analytic Torsion of $\ZA_2$-graded Elliptic Complexes]
{Analytic Torsion of $\ZA_2$-graded Elliptic Complexes}

\author{Varghese Mathai}
\address{Department of Mathematics, University of Adelaide,
Adelaide 5005, Australia}
\email{mathai.varghese@adelaide.edu.au}

\author{Siye Wu}
\address{Department of Mathematics, University of Colorado,
Boulder, CO 80309, USA and
Department of Mathematics, University of Hong Kong, Pokfulam Road, Hong Kong} 
\email{swu@math.colorado.edu\qquad swu@maths.hku.hk}

\begin{abstract}
We define analytic torsion of $\ZA_2$-graded elliptic complexes as an element
in the graded determinant line of the cohomology of the complex, generalizing
most of the variants of Ray-Singer analytic torsion in the literature.
It applies to a myriad of new examples, including flat superconnection
complexes, twisted analytic and twisted holomorphic torsions, etc. 
The definition uses pseudo-differential operators and residue traces.
We also study properties of analytic torsion for $\ZA_2$-graded elliptic
complexes, including the behavior under variation of the metric.
For compact odd dimensional manifolds, the analytic torsion is independent
of the metric, whereas for even dimensional manifolds, a relative version
of the analytic torsion is independent of the metric.
Finally, the relation to topological field theories is studied.
\end{abstract}

\keywords{Analytic torsion, elliptic complex, superconnection, twisted
holomorphic torsion, Calabi-Yau manifolds, topological field theories}

\subjclass[2000]{Primary 58J52; Secondary 58J10, 35K08, 57R56}

\thanks{V.M.\ and S.W.\ are supported in part by the Australian Research
Council.
S.W.\ is supported in part by the Research Grants Council of Hong Kong.
We thank the referee for useful comments.}

\maketitle

\section*{Introduction}

In \cite{MW}, we investigated the analytic torsion for the twisted de Rham
complex $(\Om^\bullet(X,\E),d_\E+H\wedge\cdot)$, where $\E$ is a vector bundle
with a flat connection $d_\E$ and $H$ is a closed differential form of odd
degree on a closed compact oriented manifold $X$. 
The novel feature of our construction was the necessary use of
pseudo-differential operators and residue traces in defining the torsion.
When $X$ is odd dimensional, we showed that it was independent of the choice
of metric.
In this paper, we generalize this construction, by defining analytic torsion
for an arbitrary $\ZA_2$-graded elliptic complex as an element in the graded
determinant line of the cohomology of the complex.
The definition again uses pseudo-differential operators and residue traces.
We also study properties of analytic torsion for $\ZA_2$-graded elliptic
complexes, including its behavior under variation of the metric.
For compact odd dimensional manifolds, the analytic torsion is independent
of the metric, whereas for even dimensional manifolds, only a relative
version of the analytic torsion is independent of the metric.
 
We specialize this construction to several new situations where the analytic
torsion can be defined.
This includes the case of flat superconnection complexes and the analytic
torsion of the twisted Dolbeault complex 
$(\Om^{0,\bullet}(X,\E),\bar\partial_{\E}+H\wedge\cdot)$, where $\E$ is 
a holomorphic vector bundle and $H$ is a $\bar\partial$-closed differential
form of type $(0,{\rm odd})$ on a closed connected complex manifold $X$.
When $H$ is zero, this was first studied by Ray and Singer in \cite{RS3}.
Although the definition depends on a choice of Hermitian metric, we deduce
from our general theory that a relative version of torsion, defined as a
ratio of the twisted holomorphic torsions, is independent of the metric.
(Of course they do depend on the complex structure.) 
Twisted holomorphic torsion is defined in several natural situations including,
for Calabi-Yau manifolds, or whenever there is a holomorphic gerbe.
 
Finally, we explain how twisted analytic torsion appears in topological field
theory with a twisted abelian Chern-Simons action functional.

For a more detailed literature review on analytic torsion and its variants,
we refer to the introduction in \cite{MW}.

We briefly summarize the contents of the paper. 
\S\ref{sect:z2} is on the definition of $\ZA_2$-graded elliptic complexes. 
\S\ref{sect:defn} provides the definition of the analytic torsion of a
$\ZA_2$-graded elliptic complex as an element in the graded determinant
line of the cohomology of the complex.
\S\ref{sect:prop} contains functorial properties of the analytic torsion.
\S \ref{sect:inv} estabilshes the invariance of the analytic torsion under
deformation of metrics in the odd dimensional case. 
\S\ref{sect:rel} shows the invariance of the relative analytic torsion under
deformation of metrics in the even dimensional case.
\S\ref{sect:superconn} contains the definition and properties of analytic
torsion of flat superconnections.
\S\ref{sect:dolbeault} contains the definition and properties of the analytic
torsion of twisted Dolbeault complexes.
\S\ref{sect:tft} relates the twisted analytic torsion to topological field
theories.

\section{$\ZA_2$-graded elliptic complexes}\label{sect:z2}

Let $X$ be a smooth closed manifold of dimension $n$ and
$\E=\E^{\bar0}\oplus\E^{\bar1}$, a $\ZA_2$-graded vector bundle over $X$.
(We use $\bar k$ to denote the integer $k$ modulo $2$.)
Suppose $D\colon\Gam(X,\E)\to\Gam(X,\E)$ is an elliptic differential operator
which is odd with respect to the grading in $\E$ and satisfies $D^2=0$. 
Then $D$ is of the form $D={\quad\;\;D_{\bar1}\choose D_{\bar0}\quad\;\;}$
on $\Gam(X,\E)=\Gam(X,\E^{\bar0})\oplus\Gam(X,\E^{\bar1})$, where
$D_{\bar0}\colon\Gam(X,\E^{\bar0})\to\Gam(X,\E^{\bar1})$ and
$D_{\bar1}\colon\Gam(X,\E^{\bar1})\to\Gam(X,\E^{\bar0})$ are differential
operators such that $D_{\bar1}D_{\bar0}=0$ and $D_{\bar0}D_{\bar1}=0$.
Furthermore,
\[ \cdots\to\Gam(X,\E^{\bar0})\stackrel{D_{\bar0}}{\longrightarrow}
\Gam(X,\E^{\bar1})\stackrel{D_{\bar1}}{\longrightarrow}\Gam(X,\E^{\bar0})
\stackrel{D_{\bar0}}{\longrightarrow}\Gam(X,\E^{\bar1})\to\cdots   \]
is a $\ZA_2$-graded elliptic complex, which we denote by $(\E,D)$ for short.
Its cohomology groups are
\[  H^{\bar0}(X,\E,D)=\ker D_{\bar0}/\im D_{\bar1}, \quad
    H^{\bar1}(X,\E,D)=\ker D_{\bar1}/\im D_{\bar0}.  \]
It follows from the Hodge theorem for elliptic complexes as will be explained
shortly that they are finite dimensional.
We call 
\[ b_{\bar0}(X,\E,D)=\dim H^{\bar0}(X,\E,D),\quad
b_{\bar1}(X,\E,D)=\dim H^{\bar1}(X,\E,D)           \]
the Betti numbers of the $\ZA_2$-graded elliptic complex.
Its index or Euler characteristic is
$\chi(X,\E,D)=b_{\bar0}(X,\E,D)-b_{\bar1}(X,\E,D)$.

We choose a Riemannian metric $g$ on $X$ and an Hermitian form of
type $h={h_{\bar0}\quad\;\;\choose\quad\;\;h_{\bar1}}$ on
$\E=\E^{\bar0}\oplus\E^{\bar1}$.
Then there is a scalar product $\bra\cdot,\cdot\ket$ on $\Gam(X,\E)$.
The Laplacian $L=D^\dagger D+DD^\dagger$ on 
$\Gam(X,\E)=\Gam(X,\E^{\bar0})\oplus\Gam(X,\E^{\bar1})$ is, in graded
components, $L={L_{\bar0}\quad\;\;\choose\quad\;\;L_{\bar1}}$, where
\[ L_{\bar0}=D_{\bar0}^\dagger D_{\bar0}+D_{\bar1}D_{\bar1}^\dagger,\quad
L_{\bar1}=D_{\bar1}^\dagger D_{\bar1}+D_{\bar0}D_{\bar0}^\dagger.  \]
They are self-adjoint elliptic operators with positive-definite leading
symbols.
By the Hodge theorem for elliptic complexes, one has
\[ H^{\bar0}(X,\E,D)\cong\ker L_{\bar0},\quad
   H^{\bar1}(X,\E,D)\cong\ker L_{\bar1}.      \]
By ellipticity, these spaces are finite dimensional, and hence $b_{\bar0}$,
$b_{\bar1}$ are finite.
Let $K(t,x,y)=\Big(\substack{K_{\bar0}(t,x,y)\qquad\qquad\\
\qquad\qquad K_{\bar1}(t,x,y)}\Big)$, where $t>0$, $x,y\in X$, be the kernel
of $e^{-tL}={e^{-tL_{\bar0}}\qquad\quad\choose\qquad\quad e^{-tL_{\bar1}}}$.
Suppose the order of $L$ (or that of $L_{\bar0}$ and $L_{\bar1}$) is $d>1$.
By Lemma~1.7.4 of \cite{Gil}, when restricted to the diagonal, the heat kernel
has the asymptotic expansion
\[    K(t,x,x)\sim\sum_{l=0}^\infty t^{\frac{2l-n}{d}}a_l(x),  \]
where $a_l(x)=\Big(\substack{a_{l,\bar0}(x)\qquad\quad\\ \qquad\quad
a_{l,\bar1}(x)}\Big)$ can be computed locally as an combinatorial expression
in the jets of the symbols.
We set $a_{\frac{n}{2}}(x)=0$ if $n$ is odd.
Denote by $a_{\frac{n}{2}}=\Big(\substack{a_{\frac{n}{2},\bar0}\quad\quad\;\\
\quad\quad\;a_{\frac{n}{2},\bar1}}\Big)$ the operator acting on $\Gam(X,\E)$
point-wisely by $a_{\frac{n}{2}}(x)=\Big(\substack{a_{\frac{n}{2},\bar0}(x)
\qquad\quad\\ \qquad\quad a_{\frac{n}{2},\bar1}(x)}\Big)$.
Then the index density of the elliptic complex is $\str a_{\frac{n}{2}}(x)$
and the index is $\chi(X,\E)=\Str(a_{\frac{n}{2}})$.
Here and subsequently, $\str$ is the point-wise supertrace whereas $\Str$ is
the supertrace taken on the space of sections. 

\section{Definition of the analytic torsion}\label{sect:defn}
We generalize the construction in \S2 of \cite{MW}.
Recall that the zeta-function of a semi-positive definite self-adjoint
operator $A$ (whenever it is defined) is
\[  \zeta(s,A)=\Tr'A^{-s}, \]
where $\Tr'$ stands for the trace restricted to the subspace orthogonal to
$\ker(A)$.
If $\zeta(s,A)$ can be extended meromorphically in $s$ so that it is
holomorphic at $s=0$, then the zeta-function regularized determinant of $A$ is
\[  \Det' A=e^{-\zeta'(0,A)}.    \]
If $A$ is an elliptic differential operator of order $d$ on a compact manifold
$X$ of dimension $n$, then $\zeta(s,A)$ is holomorphic when
$\mathrm{Re}(s)>n/d$ and can be extended meromorphically to the entire complex
plane with possible simple poles at $\{\frac{n-l}{m},l=0,1,2,\dots\}$ only
\cite{Se}.
Moreover, the extended function is holomorphic at $s=0$ and therefore
the determinant $\Det'A$ is defined for such an operator.

We return to the setting of the $\ZA_2$-graded elliptic complex $(\E,D)$ in
\S\ref{sect:z2}.
As in \cite{MW}, we consider the partial Laplacian 
$D^\dagger D=\Big(\substack{D_{\bar0}^\dagger D_{\bar0}\qquad\;\\
\qquad\;D_{\bar1}^\dagger D_{\bar1}}\Big)$.

\begin{prop}
For $k=0,1$, $\zeta(s,D_{\bar k}^\dagger D_{\bar k})$ is holomorphic in
the half plane for $\mathrm{Re}(s)>n/d$ and extends meromorphically to $\CO$
with possible simple poles at $\{\frac{n-l}{d},l=0,1,2,\dots\}$ and possible
double poles at the negative integers only, and is holomorphic at $s=0$.
\end{prop}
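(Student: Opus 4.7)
The plan is to use the Hodge decomposition to express $\zeta(s,D_{\bar k}^\dagger D_{\bar k})$ in terms of the heat trace of the elliptic Laplacian $L_{\bar k}$ twisted by a pseudo-differential projection, to which Seeley-type techniques then apply. The Hodge theorem recalled in \S\ref{sect:z2} gives
\[ \Gam(X,\E^{\bar k})=\ker L_{\bar k}\oplus\im D_{\bar{k+1}}\oplus\im D_{\bar k}^\dagger,\]
and on these three summands $D_{\bar k}^\dagger D_{\bar k}$ acts by $0$, $0$, and $L_{\bar k}$ respectively. Writing $\PI_{\bar k}$ for the orthogonal projection onto $\im D_{\bar k}^\dagger$, I would then rewrite
\[ \zeta(s,D_{\bar k}^\dagger D_{\bar k})=\Tr\bigl(L_{\bar k}^{-s}\PI_{\bar k}\bigr)=\frac{1}{\Gamma(s)}\int_0^\infty t^{s-1}\Tr\bigl(e^{-tL_{\bar k}}\PI_{\bar k}\bigr)\,dt\]
for $\mathrm{Re}(s)>n/d$, where absolute convergence on this half-plane is immediate from Weyl's law applied to $L_{\bar k}$, since the non-zero spectrum of $D_{\bar k}^\dagger D_{\bar k}$ is a sub-multiset of that of $L_{\bar k}$. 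A direct verification gives the identity $\PI_{\bar k}=D_{\bar k}^\dagger L_{\bar{k+1}}^{-1}D_{\bar k}$, with $L_{\bar{k+1}}^{-1}$ the Green's operator on $(\ker L_{\bar{k+1}})^\perp$, so $\PI_{\bar k}$ is a classical pseudo-differential operator of order zero.

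For the meromorphic continuation, the next step is to split the Mellin integral at $t=1$. The tail $\int_1^\infty$ is entire by exponential decay of $e^{-tL_{\bar k}}$ on $(\ker L_{\bar k})^\perp$. For the remaining small-$t$ integral, a symbol-calculus analysis of the composition $e^{-tL_{\bar k}}\PI_{\bar k}$, combining the on-diagonal heat-kernel expansion of Lemma~1.7.4 of \cite{Gil} with the classical composition formulae applied to the zeroth-order operator $\PI_{\bar k}$, yields an asymptotic expansion
\[ \Tr\bigl(e^{-tL_{\bar k}}\PI_{\bar k}\bigr)\sim\sum_{l\geq 0}\al_l\,t^{(l-n)/d}+\sum_{m\geq 0}\beta_m\,t^{m}\log t\qquad(t\to 0^+).\]
Substituting into the Mellin representation and using that $1/\Gamma(s)$ has simple zeros at $s=0,-1,-2,\dots$, each $t^{(l-n)/d}$ term contributes a simple pole of $\zeta(s,D_{\bar k}^\dagger D_{\bar k})$ at $s=(n-l)/d$, while any non-zero $t^m\log t$ coefficient contributes a double pole at $s=-m$; this accounts for the pole structure claimed in the proposition.

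The main obstacle is regularity at $s=0$, which requires the vanishing of the coefficient $\beta_0$: since $1/\Gamma(s)$ has only a simple zero at $s=0$, a non-zero $\beta_0$ would leave a genuine simple pole there. To obtain $\beta_0=0$, I would identify it with an integral over the cosphere bundle of a density built pointwise from the principal symbols of $D_{\bar k}$, $D_{\bar k}^\dagger$, and $L_{\bar k}$, and use the identities $D_{\bar k}D_{\bar{k+1}}=0$ and $L_{\bar k}=D_{\bar k}^\dagger D_{\bar k}+D_{\bar{k+1}}D_{\bar{k+1}}^\dagger$ to show that this density vanishes, paralleling the argument carried out for the twisted de Rham complex in \cite{MW}. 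Once $\beta_0=0$ is in hand, the simple zero of $1/\Gamma(s)$ cancels the simple pole contributed by the $t^0$ term of the $\al_\bullet$ expansion, and $\zeta(s,D_{\bar k}^\dagger D_{\bar k})$ is holomorphic at $s=0$, completing the plan.
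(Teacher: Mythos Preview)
Your overall strategy coincides with the paper's: both express
\[
\zeta(s,D_{\bar k}^\dagger D_{\bar k})=\Tr\bigl(P_{\bar k}\,L_{\bar k}^{-s}\bigr)
\]
with $P_{\bar k}=\PI_{\bar k}$ the orthogonal projection onto $\overline{\im D_{\bar k}^\dagger}$, observe that $P_{\bar k}=D_{\bar k}^\dagger L_{\bar{k+1}}^{-1}D_{\bar k}$ is a classical pseudo-differential operator of order zero, and then invoke the Grubb--Seeley theory of $\Tr(A\,L^{-s})$ for such $A$ to obtain the meromorphic continuation with simple poles at $(n-l)/d$ and possible double poles at negative integers. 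Up to this point the two arguments are the same.

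The divergence is at the regularity at $s=0$, and here your proposal has a gap. The paper identifies the coefficient of $s^{-1}$ in the Laurent expansion at $s=0$ as $\frac{1}{d}\res(P_{\bar k})$, the noncommutative (Wodzicki) residue of $P_{\bar k}$, and then invokes the theorem (Wodzicki, Br\"uning--Lesch, Grubb) that the noncommutative residue of any pseudo-differential \emph{projection} vanishes. Your $\beta_0$ is exactly this residue (the cosphere integral you describe is the definition of $\res$), but you propose instead to show it vanishes by manipulating the principal-symbol identities $\sigma(D_{\bar k})\sigma(D_{\bar{k+1}})=0$ and $\sigma(L_{\bar k})=\sigma(D_{\bar k})^*\sigma(D_{\bar k})+\sigma(D_{\bar{k+1}})\sigma(D_{\bar{k+1}})^*$. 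Those identities only tell you that the \emph{principal} symbol of $\PI_{\bar k}$ is a projection; they do not by themselves constrain the order-$(-n)$ homogeneous component of the full symbol, which is what enters $\res(\PI_{\bar k})$. What actually forces $\beta_0=0$ is the \emph{full} operator identity $\PI_{\bar k}^2=\PI_{\bar k}$, and passing from that to vanishing of the residue is precisely the content of the projection theorem you would be reproving. In the general $\ZA_2$-graded setting there is no concrete symbol calculus of the de~Rham type to fall back on, so ``paralleling \cite{MW}'' does not close the argument; you should instead name $\beta_0$ as $\frac{1}{d}\res(\PI_{\bar k})$ and cite the projection result, as the paper does.
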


\begin{proof}
Let $P={P_{\bar0}\quad\;\choose\quad\;P_{\bar1}}$ be the projection onto the
closure of $\im D^\dagger=\im D^\dagger_{\bar0}\oplus\im D^\dagger_{\bar1}$.
As $DD^\dagger$ and $L$ are equal and invertible on (the closure of) $\im D$,
we have
\[ P=D^\dagger(DD^\dagger)^{-1}D=D^\dagger L^{-1}D, \]
which implies that $P$ (and hence $P_{\bar0}$, $P_{\bar1}$) is
a pseudo-differential operator of order $0$.
Moreover, for $k=0,1$,
\[ \zeta(s,D_{\bar k}^\dagger D_{\bar k})=\Tr(P_{\bar k}L_{\bar k}^{-s}). \]
By general theory \cite{GrSe,Gr06}, $\zeta(s,D_{\bar k}^\dagger D_{\bar k})$
is holomorphic in the half plane $\mathrm{Re}(s)>n/d$ and extends
meromorphically to $\CO$ with possible simple poles at $\{\frac{n-l}{d}$,
$l=0,1,2,\dots\}$ and possible double poles at the negative integers only.
The Laurent series of $\zeta(s,D_{\bar k}^\dagger D_{\bar k})$ at $s=0$ is
\[  \Tr(P_{\bar k}L_{\bar k}^{-s})=
\frac{c_{-1}(P_{\bar k},L_{\bar k})}{s}+c_0(P_{\bar k},L_{\bar k})
+\sum_{l=1}^\infty c_l(P_{\bar k},L_{\bar k})\,s^l.   \]
Here $c_{-1}(P_{\bar k},L_{\bar k})=\frac{1}{d}\res(P_{\bar k})$,
where $\res(P_{\bar k})$ is known as the non-commutative residue
of $P_{\bar k}$ \cite{Wo,Gui}.
Since $P_{\bar k}$ is a projection, $\res(P_{\bar k})=0$ \cite{Wo,BrLe,Gr03}.
Therefore $\zeta(s,D_{\bar k}^\dagger D_{\bar k})$ is regular at $s=0$.
\end{proof}

The scalar product on $\Gam(X,\E^{\bar k})$ restricts to one on the null space
of the Laplacian, $\ker(L_{\bar k})\cong H^{\bar k}(X,\E,D)$.
For $k=0,1$, let $\{\nu_{\bar k,i}\}_{i=1}^{b_{\bar k}}$ be an oriented
orthonormal basis of $H^{\bar k}(X,\E,D)$ and let 
$\eta_{\bar k}=\nu_{\bar k,1}\wedge\cdots\wedge\nu_{\bar k,b_{\bar k}}$,
the unit volume element.
Then $\eta_{\bar0}\otimes\eta_{\bar1}^{-1}\in\det H^\bullet(X,\E,D)$.

\begin{defn}
The analytic torsion of the $\ZA_2$-graded elliptic complex $(\E,D)$ is
\[ \tau(X,\E,D)=(\Det'D_{\bar0}^\dagger D_{\bar 0})^{1/2}
                (\Det'D_{\bar1}^\dagger D_{\bar1})^{-1/2}
           \eta_{\bar0}\otimes\eta_{\bar1}^{-1}\in\det H^\bullet(X,\E,D).  \]
\end{defn}

\section{Functorial properties of the analytic torsion}\label{sect:prop}

We summarize some properties of the analytic torsion of $\ZA_2$-graded
elliptic complexes, generalizing those of the Ray-Singer torsion \cite{RS}
and of the torsion of the twisted de Rham complex \cite{MW}.
We omit the proofs as they are similar.

Suppose $X$ is a compact, closed, oriented Riemannian manifold and
$\E_1,\E_2$ are two $\ZA_2$-graded Hermitian vector bundles over $X$.
Then $\E_1\oplus\E_2$ is also a $\ZA_2$-graded vector bundle with
$(\E_1\oplus\E_2)^{\bar k}=\E_1^{\bar k}\oplus\E_2^{\bar k}$ for $k=0,1$.
If $(\E_1,D_1)$ and $(\E_2,D_2)$ are two $\ZA_2$-graded elliptic complexes
on $X$, then so is the direct sum $(\E_1\oplus\E_2,D_1\oplus D_2)$ defined
in the obvious way.
We have the following

\begin{prop}
Under the natural identification of determinant lines,
\[  \tau(X,\E_1\oplus\E_2,D_1\oplus D_2)
    =\tau(X,\E_1,D_1)\otimes\tau(X,\E_2,D_2).    \]
\end{prop}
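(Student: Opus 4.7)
The plan is to unpack the three ingredients in the definition of $\tau$ --- the partial Laplacians $D_{\bar k}^\dagger D_{\bar k}$, their zeta-regularized determinants, and the unit volume elements in cohomology --- and verify that each behaves additively (or multiplicatively) under the direct sum.

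First I would fix on $\E_1\oplus\E_2$ the direct-sum Hermitian structure, so that $D_1\oplus D_2$ and its formal adjoint split componentwise. Consequently the partial Laplacians decompose as
\[(D_1\oplus D_2)_{\bar k}^\dagger(D_1\oplus D_2)_{\bar k}
  =D_{1,\bar k}^\dagger D_{1,\bar k}\oplus D_{2,\bar k}^\dagger D_{2,\bar k}\]
on orthogonally invariant subspaces. Since the heat trace, and hence the zeta function, of such a direct sum is the sum of the individual zeta functions on the half plane of convergence, this identity persists under meromorphic continuation (and the proposition of \S\ref{sect:defn} gives holomorphicity at $s=0$ for each summand separately). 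Differentiating at $s=0$ yields
\[\Det'\bigl((D_1\oplus D_2)_{\bar k}^\dagger(D_1\oplus D_2)_{\bar k}\bigr)
  =\Det'(D_{1,\bar k}^\dagger D_{1,\bar k})\cdot\Det'(D_{2,\bar k}^\dagger D_{2,\bar k}).\]

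Next, applying the Hodge theorem to each summand shows that the harmonic representatives form orthogonal subspaces, so
\[H^{\bar k}(X,\E_1\oplus\E_2,D_1\oplus D_2)
  \cong H^{\bar k}(X,\E_1,D_1)\oplus H^{\bar k}(X,\E_2,D_2)\]
as inner-product spaces. Concatenating oriented orthonormal bases, in this fixed order, produces an oriented orthonormal basis of the direct sum; its top wedge corresponds, under the canonical isomorphism $\det(V\oplus W)\cong\det V\otimes\det W$, to the wedge of the individual unit volume elements $\eta_{\bar k}$.

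Putting these two ingredients together, the task reduces to identifying
\[\det H^\bullet(X,\E_1\oplus\E_2,D_1\oplus D_2)
  \cong\det H^\bullet(X,\E_1,D_1)\otimes\det H^\bullet(X,\E_2,D_2)\]
and checking that the two torsion elements agree under it. Spelling out $\det H^\bullet=\det H^{\bar 0}\otimes(\det H^{\bar 1})^{-1}$ on each side reduces this to reordering four one-dimensional lines, and the main place to be careful --- the only nontrivial step in the whole argument --- is the Koszul sign produced when one transposes $\det H^{\bar 0}(X,\E_2,D_2)$ past $(\det H^{\bar 1}(X,\E_1,D_1))^{-1}$. With the standard graded convention for inverses of super lines, this sign is absorbed into what the statement calls the "natural identification of determinant lines," so no extra correction appears and the claimed equality drops out of the two multiplicativities above.
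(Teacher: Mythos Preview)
Your proposal is correct and is precisely the standard argument the paper has in mind: the paper omits the proof, stating only that it is similar to the corresponding results for Ray--Singer torsion \cite{RS} and the twisted de Rham complex \cite{MW}, and your direct-sum decomposition of the partial Laplacians, additivity of zeta functions, and orthogonal splitting of harmonic spaces is exactly that argument. Your remark about the Koszul sign being absorbed into the ``natural identification'' is also the correct reading of the statement.
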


Now suppose $p\colon X\to X'$ is a covering of compact, closed, oriented
manifolds (with finite index).
Choose a Riemannian metric on $X'$, which pulls back to one on $X$.
Let $\E\to X$ be a $\ZA_2$-graded Hermitian vector bundle.
Then the vector bundle $p_*\E\to X'$ defined by
$(p_*\E)_{x'}=\bigoplus_{x\in p^{-1}(x')}\E_x$ (for $x'\in X'$) is also
$\ZA_2$-graded and has an induced Hermitian form.
There is a natural isometry $\Gam(X,\E)\cong\Gam(X',p_*\E)$.
If $D$ is a differential operator on $\Gam(X,\E)$, the operator $p_*D$ on
$\Gam(X',p_*\E)$ given by the above isomorphism is a differential operator
on $X'$.
If $(\E,D)$ is a $\ZA_2$-graded elliptic complex, then so is $(p_*\E,p_*D)$.
We have

\begin{prop}
Under the natural identification of determinant lines,
\[  \tau(X,\E,D)=\tau(X',p_*D).    \]
\end{prop}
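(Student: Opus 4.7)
The plan is to reduce the identity to the observation that pushforward along a finite covering is a unitary equivalence relating every object on $X$ to its counterpart on $X'$. The paper has already recorded that $\Gam(X,\E)\cong\Gam(X',p_*\E)$ as Hilbert spaces, with the inner product on the right obtained by integrating the direct-sum fiberwise Hermitian form against the metric on $X'$; since $p$ is a local isometry (the metric on $X$ is pulled back from $X'$) with finite fibers, the fiberwise summation absorbs exactly the branching of $p$, so no fiber-cardinality factor appears. By definition, $p_*D$ is the operator corresponding to $D$ under this isomorphism, and taking adjoints commutes with unitary conjugation, so the partial Laplacians $D_{\bar k}^\dagger D_{\bar k}$ and $(p_*D_{\bar k})^\dagger(p_*D_{\bar k})$ are unitarily equivalent for $k=0,1$.

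Unitarily equivalent self-adjoint operators have identical spectra with multiplicities and hence identical zeta functions on the half-plane of absolute convergence. By the previous proposition both functions admit a meromorphic continuation that is holomorphic at $s=0$, so their regularized determinants coincide:
\[ \Det' D_{\bar k}^\dagger D_{\bar k}=\Det'(p_*D_{\bar k})^\dagger(p_*D_{\bar k}),\quad k=0,1. \]
This takes care of the scalar prefactors in the definition of $\tau$.

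For the determinant-line factor, the Hodge isomorphism of \S\ref{sect:z2} identifies $H^{\bar k}(X,\E,D)\cong\ker L_{\bar k}$, and analogously on $X'$. The unitary equivalence above restricts to an isometry $\ker L_{\bar k}\cong\ker(p_*L_{\bar k})$, producing an isometric identification $H^{\bar k}(X,\E,D)\cong H^{\bar k}(X',p_*\E,p_*D)$; this is precisely the "natural identification of determinant lines" referred to in the statement. Being an isometry, it sends an oriented orthonormal basis to an oriented orthonormal basis, so the unit volume elements $\eta_{\bar k}$ match, and hence so does $\eta_{\bar 0}\otimes\eta_{\bar 1}^{-1}\in\det H^\bullet$. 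Combining with the previous step yields the required equality.

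I do not anticipate a serious obstacle; the only point that warrants care is verifying that the Hilbert space structure on $\Gam(X',p_*\E)$ is set up so that pushforward is genuinely an isometry (no stray fiber-counting factor), but this is built into the definition of the direct-sum Hermitian form on $p_*\E$ combined with the local isometry property of $p$. Everything else is the tautology that $p_*D$ is defined as the conjugate of $D$ by a unitary equivalence.
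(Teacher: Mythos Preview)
Your argument is correct. The paper actually omits the proof of this proposition entirely, stating only that the proofs of the functorial properties in \S\ref{sect:prop} are ``similar'' to those for Ray--Singer torsion \cite{RS} and for the twisted de Rham complex \cite{MW}. Your route---unitary equivalence of $\Gam(X,\E)$ and $\Gam(X',p_*\E)$, conjugation of $D$ to $p_*D$, hence equality of spectra and of zeta-regularized determinants, together with isometry of the harmonic subspaces giving matching volume elements---is precisely the expected argument, and your check that no fiber-cardinality factor appears (because the pushed-forward Hermitian form on $p_*\E$ is the fiberwise direct sum while the volume form on $X$ is the pullback of that on $X'$) is the only point requiring any care.
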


Finally, suppose $X_i$ ($i=1,2$) are closed, oriented Riemannian manifolds
and $\E_i\to X_i$ ($i=1,2$) are $\ZA_2$-graded Hermitian vector bundles.
Denote by $\pi_i\colon X_1\times X_2\to X_i$ ($i=1,2$) the projections.
Set $\E_1\boxtimes\E_2=\pi_1^*\E_1\otimes\pi_2^*\E_2$; it is also a
$\ZA_2$-graded vector bundle with
$(\E_1\boxtimes\E_2)^{\bar0}=\pi_1^*\E_1^{\bar0}\otimes\pi_2^*\E_2^{\bar0}
\oplus\pi_1^*\E_1^{\bar1}\otimes\pi_2^*\E_2^{\bar1}$ and
$(\E_1\boxtimes\E_2)^{\bar1}=\pi_1^*\E_1^{\bar0}\otimes\pi_2^*\E_2^{\bar1}
\oplus\pi_1^*\E_1^{\bar1}\otimes\pi_2^*\E_2^{\bar0}$.
If $(\E_1,D_1)$ and $(\E_2,D_2)$ are two $\ZA_2$-graded elliptic complexes,
then so is $(\E_1\boxtimes\E_2,D_1\boxtimes D_2)$, where the operator
$D_1\boxtimes D_2$ acts on $\Gam(X_1\times X_2,\E_1\boxtimes\E_2)$ according to
\[  (D_1\boxtimes D_2)(\pi_1^*s_1\otimes\pi_2^*s_2)
    =\pi_1^*(D_1s_1)\otimes\pi_2^*s_2
     +(-1)^{|s_1|}\pi_1^*s_1\otimes\pi_2^*(D_2s_2)     \]
for any $s_i\in\Gam(X_i,\E_i)$, $i=1,2$.
We have

\begin{prop}
Under the natural identification of determinant lines,
\[  \tau(X_1\times X_2,\E_1\boxtimes\E_2,D_1\boxplus D_2)
    =\tau(X_1,\E_1,D_1)^{\otimes\chi(X_2,\E_2,D_2)}\otimes
     \tau(X_2,\E_2,D_2)^{\otimes\chi(X_1,\E_1,D_1)}.      \]
\end{prop}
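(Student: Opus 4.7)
The plan is to adapt the classical Ray--Singer Künneth argument \cite{RS} for analytic torsion to the $\ZA_2$-graded setting. Equip $X_1\times X_2$ with the product metric and $\E_1\boxtimes\E_2$ with the induced Hermitian structure. A direct computation from the graded Leibniz rule defining $D_1\boxplus D_2$ shows that $(D_1\boxplus D_2)^\dagger=D_1^\dagger\boxplus D_2^\dagger$ and, after the cross terms cancel, the total Laplacian on the product decouples:
\[
L_{12}=L_1\otimes 1+1\otimes L_2 \quad\text{on}\quad \Gam(X_1,\E_1)\otimes\Gam(X_2,\E_2).
\]
Tensoring the $L^2$ Hodge decompositions $\Gam(X_i,\E_i)=H_i\oplus\im D_i\oplus\im D_i^\dagger$ yields a nine-piece orthogonal decomposition of sections on the product, the harmonic part being $H_1\otimes H_2$. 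This gives the Künneth isomorphism $H^\bullet(X_1\times X_2,\E_1\boxtimes\E_2)\cong H^\bullet(X_1,\E_1,D_1)\otimes H^\bullet(X_2,\E_2,D_2)$ of $\ZA_2$-graded vector spaces.

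Next I would verify the determinant-line side using the general super formula $\det(V\otimes W)\cong(\det V)^{\otimes\chi(W)}\otimes(\det W)^{\otimes\chi(V)}$ for two $\ZA_2$-graded vector spaces $V,W$, which follows from $\det(A\otimes B)=(\det A)^{\dim B}\otimes(\det B)^{\dim A}$ applied to each of the four tensor components. Choosing oriented orthonormal bases of the harmonic spaces on each factor produces an oriented orthonormal basis of $H_1\otimes H_2$ for the product inner product, so the unit volume element $\eta^{\bar0}_{12}\otimes(\eta^{\bar1}_{12})^{-1}$ matches the tensor combination on the right-hand side of the claimed formula.

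The analytic content is the identity of the determinant prefactors. Writing $Q_{\bar k}=(D_1\boxplus D_2)^\dagger_{\bar k}(D_1\boxplus D_2)_{\bar k}$, the quantity $\log\tau(X_1\times X_2,\E_1\boxtimes\E_2,D_1\boxplus D_2)=\tfrac12\bigl(\zeta'(0,Q_{\bar1})-\zeta'(0,Q_{\bar0})\bigr)$ has to be computed. Although $Q$ itself is not a tensor sum, the multiset of eigenvalues of $L_{12}$ restricted to $\im(D_1\boxplus D_2)^\dagger$ can be read off from the nine-piece decomposition $V_1^\alpha\otimes V_2^\beta$ with $\alpha,\beta\in\{H,+,-\}$, where $V_i^H=\ker L_i$, $V_i^\pm=\im D_i^{(\dagger)}$. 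Applying the Mellin representation together with the factorization $e^{-tL_{12}}=e^{-tL_1}\otimes e^{-tL_2}$ expresses each $\zeta(s,Q_{\bar k})$ as a convolution of the individual zeta functions with the heat traces on the harmonic subspaces of $L_1$ and $L_2$, the regularity at $s=0$ being ensured by the pole analysis of the proposition in \S\ref{sect:defn}.

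The key final step is to observe that in the alternating combination $\zeta'(0,Q_{\bar1})-\zeta'(0,Q_{\bar0})$ all cross-convolution contributions pair up and cancel, leaving only
\[
\chi(X_2,\E_2,D_2)\bigl(\zeta'(0,Q_{1,\bar1})-\zeta'(0,Q_{1,\bar0})\bigr)+\chi(X_1,\E_1,D_1)\bigl(\zeta'(0,Q_{2,\bar1})-\zeta'(0,Q_{2,\bar0})\bigr),
\]
which is exactly $2\bigl(\chi(X_2,\E_2,D_2)\log\tau(X_1,\E_1,D_1)+\chi(X_1,\E_1,D_1)\log\tau(X_2,\E_2,D_2)\bigr)$. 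Combined with the determinant line identification above, this yields the stated equality. The main obstacle is this last cancellation: carefully tracking the signs of the graded Leibniz rule when sorting the nine tensor product subspaces into the exact and coexact subspaces of the product complex, and confirming that the combinatorial identity $\chi(V\otimes W)=\chi(V)\chi(W)$ produces the Euler characteristic exponents in the correct places.
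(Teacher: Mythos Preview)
The paper does not actually give a proof of this proposition: at the start of \S\ref{sect:prop} it states that the proofs are omitted because they are ``similar'' to those for Ray--Singer torsion \cite{RS} and for the twisted de~Rham torsion \cite{MW}. Your plan is precisely the Ray--Singer K\"unneth argument transported to the $\ZA_2$-graded setting, so it is exactly the route the paper is pointing to. The ingredients you list---product metric, decoupling of the Laplacian $L_{12}=L_1\otimes1+1\otimes L_2$, the nine-block Hodge decomposition, the determinant-line identity $\det(V\otimes W)\cong(\det V)^{\otimes\chi(W)}\otimes(\det W)^{\otimes\chi(V)}$, and the Mellin/heat-kernel factorization---are the correct ones.

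Regarding the step you flag as the main obstacle: it does go through, and can be made explicit without convolution language. On each $4$-dimensional block $\mathrm{span}\{v_1^\pm\otimes v_2^\pm\}$ with $v_i^-\in\im D_i^\dagger$, $v_i^+=D_iv_i^-/\|D_iv_i^-\|$, one checks directly that $\im(D_1\boxplus D_2)^\dagger$ is two-dimensional with exactly one dimension in each parity, both carrying the single eigenvalue $\lambda_1+\lambda_2$; hence the whole block $(\im D_1\oplus\im D_1^\dagger)\otimes(\im D_2\oplus\im D_2^\dagger)$ contributes zero to $\zeta(s,Q_{\bar0})-\zeta(s,Q_{\bar1})$. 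The mixed blocks $H_1\otimes(\im D_2\oplus\im D_2^\dagger)$ and $(\im D_1\oplus\im D_1^\dagger)\otimes H_2$ then give precisely $\chi(X_1,\E_1,D_1)\bigl(\zeta(s,Q_{2,\bar0})-\zeta(s,Q_{2,\bar1})\bigr)$ and $\chi(X_2,\E_2,D_2)\bigl(\zeta(s,Q_{1,\bar0})-\zeta(s,Q_{1,\bar1})\bigr)$, respectively, after summing over the parity of the harmonic factor. So the cancellation you anticipate is real and elementary once phrased eigenspace by eigenspace.
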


\section{Invariance of the torsion under deformation of metrics:
the odd dimensional case}\label{sect:inv}

We note that the operator $D$, or $D_{\bar0}$ and $D_{\bar1}$ in the
$\ZA_2$-graded elliptic complex $(\E,D)$ are not dependent on the metric.
However, the corresponding partial Laplacians $D^\dagger D$ or
$D_{\bar0}^\dagger D_{\bar0},D_{\bar1}^\dagger D_{\bar1}$ do depend on metric,
and therefore a priori, so does the analytic torsion $\tau(X,\E,D)$. 
In this section, we study the dependence of the analytic torsion on the
metrics and prove that for closed, oriented odd-dimensional manifolds $X$,
the torsion $\tau(X,\E,D)$ is independent of the choice of metric.

Suppose we change the metric $g$ on $X$ and the Hermitian form $h$ on $\E$ 
to $g_u$ and $h_u$, respectively, along a path parameterized by $u\in\RE$.
Although the torsion $\tau(X,\E,D)$ is an element of the determinant line
$\det H^\bullet(X,\E)$, its variation
\[ \frac{\partial}{\partial u}\log\tau(X,\E,D)=
   \tau(X,\E,D)^{-1}\frac{\partial}{\partial u}\tau(X,\E,D)   \]
makes sense as a function of $u$.
For any $u$, the Hermitian structure $\bra\cdot,\cdot\ket_u$ on $\Gam(X,\E)$
is related to the undeformed one by
\[ \bra\,\cdot\,,\cdot\,\ket_u=\bra\Gam_u(\cdot),\cdot\,\ket  \]
for some invertible operator
$\Gam_u={\Gam_{\bar0}\quad\;\,\choose\quad\;\,\Gam_{\bar1}}$.
Let $\al_u=\Gam^{-1}_u\frac{\partial}{\partial u}\Gam_u$.
We have the following

\begin{thm}\label{thm:inv}
Under the above deformation of $g$ and $h$, we have
\[ \frac{\partial}{\partial u}\log\tau(X,\E,D)
   =\Str\big(\al\,a_{\frac{n}{2}}\big). \]
In particular, the above is zero if $\dim X=n$ is odd.
In this case, the analytic torsion $\tau(X,\E,D)$ is independent of the
choice of metric.
\end{thm}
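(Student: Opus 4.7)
The plan is to adapt the classical Ray--Singer variational argument to the residue-trace framework of Proposition~2.1 and to identify the surviving contribution as the constant term in the small-$t$ heat expansion. Since the $u$-deformed scalar product satisfies $\bra\cdot,\cdot\ket_u=\bra\Gam_u\cdot,\cdot\ket$, one first writes the $u$-adjoint as $D^{\dagger,u}_{\bar k}=\Gam_{\bar k}^{-1}D^{\dagger,0}_{\bar k}\Gam_{\overline{k+1}}$ (where $D^{\dagger,0}_{\bar k}$ denotes the adjoint at a reference metric) and differentiates to obtain
\[ \partial_u D^\dagger_{\bar k}=D^\dagger_{\bar k}\al_{\overline{k+1}}-\al_{\bar k}D^\dagger_{\bar k},\qquad \partial_u(D^\dagger_{\bar k}D_{\bar k})=D^\dagger_{\bar k}\al_{\overline{k+1}}D_{\bar k}-\al_{\bar k}D^\dagger_{\bar k}D_{\bar k}. \]

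Next, I would apply the standard identity $\partial_u\log\Det'(A)=\mathrm{FP}_{s=0}\Tr'[(\partial_u A)A^{-s-1}]$ (with $\mathrm{FP}$ the finite part) to $A=D^\dagger_{\bar k}D_{\bar k}$. Substituting the formula above and using cyclicity together with the intertwining relation $D_{\bar k}(D^\dagger_{\bar k}D_{\bar k})^{-s-1}D^\dagger_{\bar k}=(D_{\bar k}D^\dagger_{\bar k})^{-s}$ on the orthogonal complement of the kernel, each of the two summands reorganizes into a trace of $\al$ against $L^{-s}$ with either $P_{\bar k}$ or $1-P_{\bar k}$ inserted, where $P_{\bar k}$ is the pseudo-differential projection introduced in the proof of Proposition~2.1. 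Upon combining the $k=0,1$ contributions with the signs dictated by the definition of $\tau$, the projections complete to the identity $P_{\bar k}+(1-P_{\bar k})=1$, so that the determinant variation reduces (up to normalization) to $\mathrm{FP}_{s=0}\Str[\al L^{-s}]$. The remaining piece of $\partial_u\log\tau$ comes from the variation of $\eta_{\bar 0}\otimes\eta_{\bar 1}^{-1}$ within the determinant line, which is nonzero because the Hermitian structure induced on $\ker L_{\bar k}\cong H^{\bar k}(X,\E,D)$ depends on $u$; using that infinitesimal changes of harmonic representatives lie in $\im D_{\bar k}\oplus\im D^\dagger_{\bar k}$, hence are orthogonal to $\ker L_{\bar k}$ in the $u$-metric, one finds this piece to be proportional to $\Str[H\al]$, where $H$ is the total harmonic projector.

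Finally, by the Mellin transform applied to the expansion $K(t,x,x)\sim\sum_l t^{(2l-n)/d}a_l(x)$ of Lemma~1.7.4 in \cite{Gil}, one obtains $\mathrm{FP}_{s=0}\Str[\al L^{-s}]=\Str(\al\,a_{\frac{n}{2}})-\Str[H\al]$; the $\Str[H\al]$ pieces cancel against the $\eta$-term, yielding $\partial_u\log\tau(X,\E,D)=\Str(\al\,a_{\frac{n}{2}})$. When $n$ is odd, the equation $2l=n$ admits no integer solution, so $a_{\frac{n}{2}}$ vanishes by the convention adopted in \S\ref{sect:z2}; the variation is then identically zero and $\tau(X,\E,D)$ is independent of the choice of metric. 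The main obstacle is the simultaneous bookkeeping of the $P_{\bar k}/(1-P_{\bar k})$ structure arising from cyclicity together with the harmonic contribution from $\partial_u(\eta_{\bar 0}\otimes\eta_{\bar 1}^{-1})$: verifying that these assemble exactly into the purely local quantity $\Str(\al\,a_{\frac{n}{2}})$ — with no residual global terms and with the potential noncommutative-residue contributions of $P_{\bar k}$ (which vanish by the Wodzicki-type argument of Proposition~2.1) properly accounted for — is the technical heart of the argument.
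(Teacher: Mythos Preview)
Your proposal is correct and follows essentially the same Ray--Singer variational strategy as the paper: differentiate the adjoint via $\partial_u D^\dagger=-[\al,D^\dagger]$, reduce the determinant variation to a zeta/heat-trace quantity whose value at $s=0$ is governed by the $a_{n/2}$-term of the small-$t$ expansion, and cancel the harmonic-projector contribution against the variation of $\eta_{\bar0}\otimes\eta_{\bar1}^{-1}$. The only cosmetic difference is that the paper packages the computation through the Mellin integral $f(s,u)=\int_0^\infty t^{s-1}\Str(e^{-tD^\dagger D}P)\,dt$ and an integration by parts in $t$, whereas you work directly with complex powers $L^{-s}$ and finite parts; these are equivalent via the Mellin transform you invoke at the end.
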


\begin{proof}
We generalize the proof of Lemma~3.1 in \cite{MW}.
The adjoint of $D$ with respect to $\bra\cdot,\cdot\ket_u$ is
$D^\dagger_u=\Gam_u^{-1}D^\dagger\Gam_u$.
Its variation is given by
\[ \frac{\partial}{\partial u}D^\dagger_u=-[\al_u,D^\dagger_u].  \]
In graded components, this is
\[ \frac{\partial D_{\bar0}}{\partial u}=-\al_{\bar1}D_{\bar0}+D_0\al_{\bar0},
  \quad
 \frac{\partial D_{\bar1}}{\partial u}=-\al_{\bar0}D_{\bar1}+D_1\al_{\bar1}, \]
where $\al_{\bar0}=\Gam_{\bar0}^{-1}\frac{\partial}{\partial u}\Gam_{\bar0}$,
$\al_{\bar1}=\Gam_{\bar1}^{-1}\frac{\partial}{\partial u}\Gam_{\bar1}$.
Following \cite{RS,MW}, we set
\begin{align*}
f(s,u)&=\int_0^\infty t^{s-1}\Str(e^{-tD^\dagger D}P)\,dt         \\
      &=\Gamma(s)\big(\zeta(s,D_{\bar0}^\dagger D_{\bar0})
        -\zeta(s,D_{\bar1}^\dagger D_{\bar1})\big),
\end{align*}
omitting the dependence on $u$ on the right-hand side.
Then, as $P\frac{\partial P}{\partial u}=0$,
\begin{align*}
\frac{\partial f}{\partial u} 
&=\int_0^\infty\!t^{s-1}\Str\Big(t[\al,D^\dagger]De^{-tD^\dagger D}
  +Pe^{-tD^\dagger D}\frac{\partial P}{\partial u}\Big)\,dt           \\
&=\int_0^\infty\!t^{s-1}\Str\Big(t\al\big(e^{-tD^\dagger D}D^\dagger D
  +e^{-tDD^\dagger}DD^\dagger\big)
  +e^{-tD^\dagger D}P\frac{\partial P}{\partial u}\Big)dt             \\
&=\int_0^\infty\!t^s\Str\big(\al e^{-tL}L\big)\,dt                    \\
&=-\int_0^\infty\!t^s\frac{\partial}{\partial t}
  \Str\big(\al(e^{-tL}-Q)\big)\,dt                                    \\
&=s\int_0^\infty\!t^{s-1}\Str\big(\al(e^{-tL}-Q)\big)\,dt.
\end{align*}
By the asymptotic expansion of $\Str(\al e^{-tL})$ as $t\downarrow 0$,
$\int_0^1 t^{s-1}\Str(\al e^{-tL})\,dt$ has a possible first order pole
at $s=0$ with residue $\Str(\al a_{\frac{n}{2}})$.
On the other hand, because of the exponential decay of 
$\Str\big(\alpha(e^{-tL}-Q)\big)$ for large $t$, 
$\int_1^\infty t^{s-1}\Str\big(\al(e^{-tL}-Q)\big)\,dt$
is an entire function in $s$.
So
\[ \frac{\partial f}{\partial u}\Big|_{s=0}
   =-\Str\big(\al(a_{\frac{n}{2}}-Q)\big)     \]
is finite and hence
\[ \frac{\partial}{\partial u}\big(\zeta(0,D_{\bar0}^\dagger D_{\bar0})
 -\zeta(0,D_{\bar1}^\dagger D_{\bar1})\big)=0.   \]
Since
\[ \log\Big(\frac{\Det'D_{\bar 0}^\dagger D_{\bar 0}}
                 {\Det'D_{\bar 1}^\dagger D_{\bar 1}}\Big)
=-\lim_{s\to0}\Big[f(s,u)-\frac{1}{s}\big(\zeta(0,D_{\bar0}^\dagger D_{\bar0})
 -\zeta(0,D_{\bar1}^\dagger D_{\bar1})\big)\Big],   \]
we get
\[ \frac{\partial}{\partial u}\log\Big(\frac{\Det'D_{\bar 0}^\dagger
                D_{\bar 0}}{\Det'D_{\bar 1}^\dagger D_{\bar 1}}\Big)
   =\Str\big(\al(a_{\frac{n}{2}}-Q)\big).               \]
Finally, along the path of deformation, the volume elements $\eta_{\bar0}$,
$\eta_{\bar1}$ can be chosen so that (cf.~Lemma~3.3 of \cite{MW})
\[ \frac{\partial}{\partial u}(\eta_{\bar0}\otimes\eta_{\bar1}^{-1})
   =-\frac{1}{2}\Str(\al Q)\,\eta_{\bar0}\otimes\eta_{\bar1}^{-1}.      \]
The results then follow.
\end{proof}

When the elliptic complex is the ($\ZA$-graded) de Rham complex of 
differential forms with values in a flat vector bundle, the variation
of the torsion can be integrated to an anomaly formula \cite{BZ}.

\section{Invariance of relative torsion under deformation of metrics:
the even dimensional case}\label{sect:rel}

When $n=\dim X$ is even, the torsion does depend on the metrics $g$ on $X$
and $h$ on $\E$ (Theorem~\ref{thm:inv}).
However, we will prove that the relative analytic torsion defined below is
independent of the choice of metric.

We first explain extension by flat bundles.
Let $\PI=\pi_1(X)$ be the fundamental group of $X$ and
$\rho\colon\PI\to\GL(m,\CO)$, a representation of $\PI$.
Then $\rho$ determines a flat bundle $\F_\rho$ over $X$ given by
\[\F_\rho=\big(\widetilde X\times\CO^m\big)/\sim,\qquad
     (x\gam,v)\sim(x,\rho(\gam) v),    \]
where $\widetilde X$ is the universal cover of $X$.
Smooth sections of $\F_\rho$ are smooth maps $s\colon\widetilde X\to\CO^m$ that
are $\PI$-equivariant, i.e., $s\circ\gam=\rho(\gam)s$ for all $\gam\in\PI$.
We want to extend $D$ to an action on the sections of
$\E_\rho=\E\otimes\F_\rho$.
Since $D$ is a differential operator, it lifts to the universal cover
$\widetilde X$ as a $\PI$-periodic operator $\widetilde D\colon
\Gam(\widetilde X,\widetilde\E)\to\Gam(\widetilde X,\widetilde\E)$,
where $\widetilde\E$ is the pull-back of $\E$ to $\widetilde X$.
By tensoring with the identity operator on $\CO^m$, we can extend it to
$\widetilde D\colon\Gam(\widetilde X,\widetilde\E\otimes\CO^m)\to 
\Gam(\widetilde X,\widetilde\E\otimes\CO^m)$.
Since for any $\PI$-equivariant section
$s\in\Gam(\widetilde X,\widetilde\E\otimes\CO^m)$,
\[ (\widetilde Ds)\circ\gam=\widetilde D(s\circ\gam)
   =\widetilde D(\rho(\gamma)s)=\rho(\gamma)(\widetilde Ds),   \]
the operator $\widetilde D$ descends to a differential operator
$D_\rho\colon\Gam(X,\E_\rho)\to\Gam(X,\E_\rho)$.
If $(\E,D)$ is a $\ZA_2$-graded elliptic complex, then so is
$(\E_\rho,D_\rho)$.

Now suppose $X$ is a closed, compact, oriented Riemannian manifold and
$\E$ is a $\ZA_2$-graded Hermitian vector bundle.
Let $\rho_1,\rho_2$ be unitary representations of $\PI$ of the same
dimension $m$. 
Then the flat bundles $\F_{\rho_i}$ are Hermitian bundles and so are
$\E_{\rho_i}=\E\otimes\F_{\rho_i}$ ($i=1,2$).
Furthermore, if $(\E,D)$ is a $\ZA_2$-graded elliptic complex as in
\S\ref{sect:z2}, then so are $(\E_{\rho_i},D_{\rho_i})$ for $i=1,2$. 

\begin{defn}
The {\em relative analytic torsion} is the quotient
\[ \tau(X,\E_{\rho_1},D_{\rho_1})\otimes\tau(X,\E_{\rho_2},D_{\rho_2})^{-1}
   \in\det H^\bullet(X,\E_{\rho_1},D_{\rho_1})\otimes
      \det H^\bullet(X,\E_{\rho_2},D_{\rho_2})^{-1}.    \]
\end{defn}

To show its independence of the metric, let $K_{\rho_i}(t,x,y)$ denote,
for $i=1,2$, the heat kernel of the Laplacians
$L_{\rho_i}=D_{\rho_i}^\dagger D_{\rho_i}+D_{\rho_i}D_{\rho_i}^\dagger$.
Since the Hermitian bundles $\E_{\rho_1}$ and $\E_{\rho_2}$, together
with the differential operators $D_{\rho_1}$ and $D_{\rho_2}$ are locally
identical, the difference in the two heat kernels, when restricted to the
diagonal, is exponentially small for small $t$.
More precisely, we have 

\begin{prop}\label{prop:rel}
In the notation above, there are positive constants $C,C'$ such that as
$t\to 0$, one has for all $x\in X$,
\[ |K_{\rho_1}(t,x,x)-K_{\rho_2}(t,x,x)|\le
   Ct^{-n/d}\exp[-C't^{-\frac{d}{d-1}}],  \]
where $d$ is the order of the Laplacians.
\end{prop}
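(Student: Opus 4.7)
The plan is to exploit that after pulling back to the universal cover $p\colon\widetilde X\to X$ the two problems for $\rho_1$ and $\rho_2$ become literally identical: each flat bundle $\F_{\rho_i}$ trivialises on $\widetilde X$, so $p^*\E_{\rho_i}\cong\widetilde\E\otimes\CO^m$ canonically, and the lifted operator is $\widetilde D\otimes\id_{\CO^m}$, independent of $i$. Consequently the lifted Laplacian $\widetilde L$ and its heat kernel $\widetilde K(t,\tilde x,\tilde y)$ on $\widetilde X$ are the same for $i=1,2$. By the standard covering-space formula the downstairs kernels can be written as
\[ K_{\rho_i}(t,x,y)=\sum_{\gam\in\PI}\widetilde K(t,\tilde x,\gam\tilde y)\,\rho_i(\gam), \]
with conventions chosen so that the right-hand side has the correct $\PI$-equivariance.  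Setting $y=x$ with a common lift $\tilde x$ and subtracting, the $\gam=e$ contributions cancel, leaving
\[ K_{\rho_1}(t,x,x)-K_{\rho_2}(t,x,x)=\sum_{\gam\neq e}\widetilde K(t,\tilde x,\gam\tilde x)\,\big(\rho_1(\gam)-\rho_2(\gam)\big). \]

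I would then estimate this sum. Unitarity of the $\rho_i$ gives $\|\rho_1(\gam)-\rho_2(\gam)\|\le 2$ uniformly in $\gam$, so the matrix factors are harmless.  Geometrically, the function $\tilde x\mapsto\min_{\gam\neq e}d(\tilde x,\gam\tilde x)$ is $\PI$-invariant, continuous, and strictly positive; it descends to the compact quotient $X$ and therefore admits a positive lower bound $\ell>0$, so that $d(\tilde x,\gam\tilde x)\ge\ell$ for every $\gam\neq e$ and every $\tilde x\in\widetilde X$.  The main analytic input is an off-diagonal Gaussian-type bound for the heat kernel of the order-$d$ elliptic self-adjoint operator $\widetilde L$ on $\widetilde X$ (which has bounded geometry as a Riemannian cover of the compact $X$), of the form
\[ \big|\widetilde K(t,\tilde x,\tilde y)\big|\le C\,t^{-n/d}\exp\!\big[-c\,d(\tilde x,\tilde y)^{d/(d-1)}\,t^{-1/(d-1)}\big] \]
valid uniformly for $t$ in a bounded interval.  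Plugging this in, factoring out the extremal exponential coming from the $\gam$ that realise $d(\tilde x,\gam\tilde x)$ closest to $\ell$, and dominating the residual sum by the convergent series $\sum_{\gam\neq e}\exp(-c'\,d(\tilde x,\gam\tilde x))$ (convergence from the polynomial volume growth of $\PI$-orbits against exponential decay), yields an estimate of the claimed shape, with $C'$ essentially $c\,\ell^{d/(d-1)}$; uniformity in $x\in X$ is built in because $\ell$ and the constants in the Gaussian bound are controlled by the compactness of $X$.

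The step I expect to be the main obstacle is establishing the off-diagonal heat kernel bound in the required sharp form.  For $d=2$ this is the classical Li--Yau/Cheeger--Yau Gaussian estimate, but for $d>2$ one must invoke the sharper subordination-type bound with spatial exponent $d/(d-1)$, which is available either by Davies--Gaffney-style arguments applied to the resolvent and then transferred to the semigroup, or via a heat-calculus parametrix construction on a manifold of bounded geometry.  Extending the scalar estimate to the matrix-valued operator $\widetilde L$ acting on $\widetilde\E\otimes\CO^m$ is then routine.  Once this off-diagonal bound is in hand, the remaining ingredients --- cancellation of the $\gam=e$ term via the deck-group sum, the uniform bound on $\rho_1(\gam)-\rho_2(\gam)$, and summability over $\PI$ --- combine to give the claimed exponential smallness.
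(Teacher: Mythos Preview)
Your overall strategy coincides with the paper's: lift to the universal cover, use the deck-group expansion $K_{\rho_i}(t,\bar x,\bar x)=\sum_{\gam\in\PI}\widetilde K(t,\tilde x,\gam\tilde x)\rho_i(\gam)$, cancel the $\gam=e$ term upon subtraction, bound $\|\rho_1(\gam)-\rho_2(\gam)\|\le2$ by unitarity, and invoke an off-diagonal higher-order Gaussian estimate for $\widetilde K$ on the cover (the paper cites Br\"uning--Sunada for this step).

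There is, however, one genuine gap in your summation over $\PI\setminus\{e\}$. You assert convergence of $\sum_{\gam\neq e}\exp(-c'd(\tilde x,\gam\tilde x))$ by appealing to ``polynomial volume growth of $\PI$-orbits'', but this is false in general: for a closed hyperbolic manifold, or any $X$ whose fundamental group has exponential growth, the number of $\gam$ with $d(\tilde x,\gam\tilde x)\le R$ grows like $e^{\lambda R}$, and a mere exponential tail $e^{-c'R}$ need not dominate it. The paper's fix is to compare Riemannian distance with word length via Milnor's theorem, $d(\tilde x,\gam\tilde x)\ge C_3\,\ell(\gam)$, use the universal bound $\#\{\gam:\ell(\gam)=l\}\le C_4e^{C_5l}$, and then observe that the \emph{super-linear} exponent $d/(d-1)>1$ in the off-diagonal estimate makes $\sum_{l\ge1}e^{C_5l}\exp\!\big[-C'(l/t)^{d/(d-1)}\big]$ converge for $0<t\le1$, with the leading factor $\exp[-C't^{-d/(d-1)}]$ splitting off. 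Your injectivity-radius bound $\ell>0$ and factoring-out step are fine, but the residual sum must be controlled using this super-linearity against at-most-exponential growth, not a polynomial-growth hypothesis that does not hold for arbitrary closed $X$.
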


\begin{proof}
Let $\widetilde K(t,x,y)$ denote the heat kernel of the Laplacian
$\widetilde L=\widetilde D^\dagger\widetilde D
+\widetilde D\widetilde D^\dagger$ on $\widetilde X$.
Then by the Selberg principle, one has for $x,y\in\widetilde X$,
\[ K_{\rho_j}(t,\bar x,\bar y)=
   \sum_{\gam\in\PI}\widetilde K(t,x,y\gam)\rho_j(\gam),   \]
where $\bar x\in X$ stands for the projection of $x\in\widetilde X$.
It follows that 
\[  K_{\rho_1}(t,\bar x,\bar y)-K_{\rho_2}(t,\bar x,\bar y)
    =\sum_{\gam\in\PI\setminus\{1\}}\widetilde K(t,x,y\gam)
     (\rho_1(\gam)-\rho_2(\gam)).       \]
Since $\rho_i$ ($i=1,2$) are unitary representations, one has
\[ |K_{\rho_1}(t,\bar x,\bar y)-K_{\rho_2}(t,\bar x,\bar y)|\le
   \sum_{\gam\in\PI\setminus\{1\}}2|\widetilde K(t,x,y\gamma)|.  \]
The off-diagonal Gaussian estimate for the heat kernel on $\widetilde X$ is
\cite{BrSu}
\[  |\widetilde K(t,x,y)|\le C_1t^{-n/d}
       \exp\Big[-C_2\Big(\frac{d(x,y)}{t}\Big)^{\frac{d}{d-1}}\Big],   \]
where $d(x,y)$ is the Riemannian distance between $x,y\in\widetilde X$.
Therefore 
\[ |K_{\rho_1}(t,\bar x,\bar x)-K_{\rho_2}(t,\bar x,\bar x)|
  \le2C_1t^{-n/d}\sum_{\gam\in\PI\setminus\{1\}}
  \exp\Big[-C_2\Big(\frac{d(x,x\gam)}{t}\Big)^{\frac{d}{d-1}}\Big].   \]
By Milnor's theorem \cite{Mi}, there is a positive constant $C_3$ such that 
$d(x,x\gam)\ge C_3\ell(\gam)$, where $\ell$ denotes a word metric on $\PI$.
Moreover, the number of elements in the sphere $S_l$ of radius $l$ in $\PI$
satisfies $\#S_l\le C_4\,e^{C_5l}$ for some positive constants $C_4,C_5$.
Therefore
\begin{align*}
&\sum_{\gam\in\PI\setminus\{1\}}
 \exp\Big[-C_2\Big(\frac{d(x,x\gam)}{t}\Big)^{\frac{d}{d-1}}\Big]            \\    
\le&\sum_{\gam\in\PI\setminus\{1\}}
 \exp\big[-C'(\ell(\gam)/t)^{\frac{d}{d-1}}\big]                             \\ 
\le&\;\sum_{l=1}^\infty\exp\big[-C'(l/t)^{\frac{d}{d-1}}\big]\,C_4\,e^{C_5l} \\
\le&\;C_4\exp[-C't^{-\frac{d}{d-1}}]\sum_{l=1}^\infty
 \exp\big[-C'(l^{\frac{d}{d-1}}-1)+C_5l\big]                        
\end{align*}
for all $t$ such that $0<t\le1$ for some positive constant $C'$.
Since $\frac{d}{d-1}>1$, the infinite sum over $l$ converges and hence
the result.
\end{proof}

\begin{thm}\label{thm:rel}
Let $X$ be a closed oriented manifold of even dimension.
Let $\rho_1,\rho_2$ be unitary representations of $\pi_1(X)$ of the same
dimension.
Then the relative analytic torsion
$\tau(X,\E_{\rho_1},D_{\rho_1})\otimes\tau(X,\E_{\rho_2},D_{\rho_2})^{-1}$
is independent of the choice of metric.
\end{thm}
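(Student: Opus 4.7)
The plan is to apply Theorem~\ref{thm:inv} to each of $(\E_{\rho_1},D_{\rho_1})$ and $(\E_{\rho_2},D_{\rho_2})$ separately and to invoke Proposition~\ref{prop:rel} to show that the two variations are pointwise equal, so that their difference — the logarithmic derivative of the relative torsion — vanishes identically.

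Fix a one-parameter family of metrics $(g_u,h_u)$ on $(X,\E)$. Since each $\rho_i$ is unitary, the induced Hermitian form on the flat bundle $\F_{\rho_i}$ is canonically determined and does not deform along the path. Hence the Hermitian form on $\E_{\rho_i}=\E\otimes\F_{\rho_i}$ is deformed by $\Gam_u\otimes\id_{\F_{\rho_i}}$, and the corresponding variation operator is $\al^{(i)}=\al\otimes\id_{\F_{\rho_i}}$, where $\al=\Gam_u^{-1}\partial_u\Gam_u$ is the variation operator of $(\E,D)$. Applying Theorem~\ref{thm:inv} to each complex gives
\[
\frac{\partial}{\partial u}\log\tau(X,\E_{\rho_i},D_{\rho_i})
=\Str\bigl(\al^{(i)}\,a_{n/2}^{(\rho_i)}\bigr),\qquad i=1,2,
\]
where $a_{n/2}^{(\rho_i)}$ is the on-diagonal $t^0$-coefficient of the heat expansion of $L_{\rho_i}$.

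By Proposition~\ref{prop:rel}, $K_{\rho_1}(t,x,x)-K_{\rho_2}(t,x,x)$ is exponentially small as $t\downarrow0$, so the asymptotic expansions in powers of $t^{(2l-n)/d}$ of $K_{\rho_1}(t,x,x)$ and $K_{\rho_2}(t,x,x)$ agree term by term. In any local trivialization of $\F_{\rho_i}$ one then has $a_{n/2}^{(\rho_i)}(x)=a_{n/2}(x)\otimes\id_{\CO^m}$, where $a_{n/2}$ is the diagonal heat coefficient of the Laplacian $L$ on $\E$. Since $\F_{\rho_i}$ is ungraded, the pointwise supertrace factors, and
\[
\str\bigl(\al^{(i)}(x)\,a_{n/2}^{(\rho_i)}(x)\bigr)
=m\cdot\str\bigl(\al(x)\,a_{n/2}(x)\bigr),\qquad i=1,2,
\]
a global scalar function on $X$ that is independent of $i$. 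Integrating over $X$ yields $\Str(\al^{(1)}\,a_{n/2}^{(\rho_1)})=\Str(\al^{(2)}\,a_{n/2}^{(\rho_2)})$, so the two variations agree, and hence the variation of $\log\bigl(\tau(X,\E_{\rho_1},D_{\rho_1})\otimes\tau(X,\E_{\rho_2},D_{\rho_2})^{-1}\bigr)$ vanishes.

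The only subtle point — rather than a genuine obstacle — is that $\F_{\rho_1}$ and $\F_{\rho_2}$ have no canonical global identification, so $a_{n/2}^{(\rho_1)}(x)$ and $a_{n/2}^{(\rho_2)}(x)$ live in different endomorphism bundles and can be directly compared only in local trivializations. What makes the argument go through is that the weighted pointwise supertrace reduces to $m\cdot\str(\al\,a_{n/2})$, which is an intrinsic global function on $X$. Unitarity enters essentially twice: it yields the decay bound of Proposition~\ref{prop:rel}, and it fixes the Hermitian structure on $\F_{\rho_i}$ so that the variation operator has the product form $\al^{(i)}=\al\otimes\id$.
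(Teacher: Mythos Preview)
Your proof is correct and follows the same strategy as the paper: apply Theorem~\ref{thm:inv} to each twisted complex and use Proposition~\ref{prop:rel} to conclude that the two weighted heat coefficients coincide. The paper's proof is a terse two-line version of yours; you have added the useful clarification that $\al^{(i)}=\al\otimes\id_{\F_{\rho_i}}$ (which uses unitarity) and that the comparison $a_{n/2}^{(\rho_1)}=a_{n/2}^{(\rho_2)}$ is to be understood after passing to the scalar $m\cdot\str(\al\,a_{n/2})$, since the two endomorphism bundles are not globally identified.
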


\begin{proof}
By Theorem~\ref{thm:inv}, under a one-parameter deformation of the metric,
\[ \frac{\partial}{\partial u}\log\tau(X,\E_{\rho_i},D_{\rho_i})
   =\Str\big(\al\,a_{\frac{n}{2}}^{\rho_i})\big)                 \]
for $i=1,2$. 
By Proposition~\ref{prop:rel}, we have
$a_{\frac{n}{2}}^{\rho_1}=a_{\frac{n}{2}}^{\rho_2}$.
Therefore the change in relative torsion is zero.
\end{proof}

\section{Analytic torsion of flat superconnections}\label{sect:superconn}
 
The concept of superconnection was initiated by Quillen, cf.~\cite{Q,MQ,BGV}.
Let $X$ be a smooth manifold and $\F=\F^{\bar0}\oplus\F^{\bar1}$,
a $\ZA_2$-graded vector bundle over $X$.
Then the space $\Om(X,\F)$ of $\F$-valued differential forms has a
$\ZA_2$-grading with
\[ \Om(X,\F)^{\bar0}=\Om^{\bar0}(X,\F^{\bar0})\oplus\Om^{\bar1}(X,\F^{\bar1}),
   \quad
 \Om(X,\F)^{\bar1}=\Om^{\bar0}(X,\F^{\bar1})\oplus\Om^{\bar1}(X,\F^{\bar0}). \]
A {\em superconnection} is a first-order differential operator $\Nab$ on
$\Om(X,\F)$ that is odd with respect to the $\ZA_2$-grading and satisfies
\[ \Nab(\al\wedge s)=d\al\wedge s+(-1)^{|\al|}\al\wedge\Nab s    \]
for any $\al\in\Om(X)$ and $s\in\Om(X,\F)$.
The bundle $\End(\F)$ is also $\ZA_2$-graded and $\Nab$ extends to
$\Om(X,\End(\F))$.
The curvature of the superconnection is
$F_\Nab=\Nab^2\in\Om(X,\End(\F))^{\bar0}$.
It satisfies the Bianchi identity $\Nab F_\Nab=0$.
A superconnection $\Nab$ is of the form $\Nab=\nabla+\A$, where $\nabla$
is a usual connection on $\F$ preserving the grading and
$\A\in\Om(X,\End(\F))^{\bar1}$.
Thus the superconnections form an affine space modeled on the vector space
$\Om(X,\End(\F))^{\bar1}$.

The superconnection is flat if $F_\Nab=0$.
In this case, writing $\Nab={\quad\;\Nab_{\bar1}\choose\Nab_{\bar0}\quad\;}$,
there is a $\ZA_2$-graded elliptic complex
\[  \cdots\to\Om(X,\F)^{\bar 0}\stackrel{\Nab_{\bar0}}{\longrightarrow}
    \Om(X,\F)^{\bar1}\stackrel{\Nab_{\bar1}}{\longrightarrow}\Om(X,\F)^{\bar0}
    \stackrel{\Nab_{\bar0}}{\longrightarrow}\Om(X,\F)^{\bar1}\to\cdots,     \]
We can define the cohomology groups $H^{\bar k}(X,\F,\Nab)$, $k=0,1$.
In fact, this is a special case of \S\ref{sect:z2} with
$\E=\medwedge TX\otimes\F$ and $D=\Nab$.
If $X$ is a closed, compact, oriented Riemannian manifold and $\F$ is an
Hermitian vector bundle, then we can define the analytic torsion of a flat
superconnection as $\tau(X,\F,\Nab)=\tau(X,\E,D)\in\det H^\bullet(X,\F,\Nab)$
with the above choice of $(\E,D)$.
The functorial properties (\S\ref{sect:prop}) and invariance under metric
deformations (\S\ref{sect:inv}, \ref{sect:rel}) hold in this case.

We consider a special case when $\F=\F^{\bar0}$ and $\F^{\bar1}=0$.
Then $\Om(X,\F)^{\bar k}=\Om^{\bar k}(X,\F)$ for $k=0,1$.
A superconnection is of the form $\nabla+\A$, where $\nabla$ is a usual
connection on $\F$ and $\A\in\Om^{\bar1}(X,\End(\F))$.
Suppose $\A$ is of degree $3$ or higher.
Then the superconnection is flat if and only if $\nabla$ is flat and
$\nabla\A+\A^2=0$.
When $\A$ is of the form $\A=H\id_\F$ for some $H\in\Om^{\bar1}(X)$, the above
condition on $\A$ becomes $dH=0$ and the $\ZA_2$-graded elliptic complex is
the twisted de Rham complex $(\Om(X),d+H\wedge\cdot)$.
Its analytic torsion $\tau(X,\F,H)$ was studied in \cite{MW}.
Among other properties, the latter is also invariant under the deformation of
$H$ by an exact form when $X$ is odd dimensional; the rest of the section will
be devoted to generalizing this property to the analytic torsion of flat
superconnections.

We return to the general case of a flat superconnection $\Nab$ over a graded
vector bundle $\F$.
Suppose $G\in\Om(X,\End(\F))^{\bar0}$ is point-wisely invertible.
Then $\Nab'=G^{-1}\Nab G$ is another flat superconnection on $\F$; we say
that $\Nab'$ is gauge equivalent to $\Nab$.
There is an isomorphism of cohomology groups
$H^\bullet(X,\F,\Nab)\cong H^\bullet(X,\F,\Nab')$, and hence of the
corresponding determinant lines, induced by $G$.
Now suppose $\Nab$ is deformed to $\Nab_v$ along a path parameterized by $v$
so that each $\Nab_v$ is gauge equivalent to $\Nab$ via $G_v$.
Let
\[\beta_v=G_v^{-1}\frac{\partial G_v}{\partial v}\in\Om(X,\End(\F))^{\bar0}.\]

\begin{thm}
Under deformation of $\Nab$ by gauge equivalence and the natural
identification of determinant lines, we have
\[ \frac{\partial}{\partial v}\log\tau(X,\F,\Nab)
   =\Str\big(\beta\,a_{\frac{n}{2}}\big). \]
If $\dim X=n$ is odd, then the above is zero.
In this case, the analytic torsion $\tau(X,\F,\Nab)$ is invariant under
gauge equivalence.
\end{thm}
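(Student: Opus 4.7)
The plan is to adapt the proof of Theorem~\ref{thm:inv}, with the essential modification that both $\Nab_v$ and its formal adjoint vary with $v$, while the Hermitian metric on $\F$ and the Riemannian metric on $X$ are held fixed.

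First I would compute the infinitesimal variations. Since $\Nab_v = G_v^{-1}\Nab G_v$ and, with respect to the fixed Hermitian form, $\Nab_v^\dagger = G_v^\dagger \Nab^\dagger G_v^{-\dagger}$, direct differentiation yields
\[ \frac{\partial \Nab_v}{\partial v}=[\Nab_v,\beta_v],\qquad
   \frac{\partial \Nab_v^\dagger}{\partial v}=[\beta_v^\dagger,\Nab_v^\dagger], \]
where $\beta_v^\dagger$ is the pointwise adjoint of $\beta_v$ with respect to the Hermitian form on $\F$. These two commutator identities replace the single identity $\partial_u D_u^\dagger = -[\al_u, D_u^\dagger]$ used in the proof of Theorem~\ref{thm:inv}.

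Next, following the strategy of Theorem~\ref{thm:inv}, I would introduce the projection $P_v=\Nab_v^\dagger L_v^{-1}\Nab_v$ onto $\overline{\im \Nab_v^\dagger}$ and set
\[ f(s,v)=\int_0^\infty t^{s-1}\Str(e^{-t\Nab_v^\dagger\Nab_v}P_v)\,dt
   =\Gam(s)\big(\zeta(s,\Nab_{v,\bar0}^\dagger\Nab_{v,\bar0})-\zeta(s,\Nab_{v,\bar1}^\dagger\Nab_{v,\bar1})\big). \]
Differentiating in $v$ by Duhamel's formula produces four terms, two from each commutator above. These should be reorganized by the cyclic property of $\Str$ together with the commutations $[L_v,\Nab_v]=0=[L_v,\Nab_v^\dagger]$ (consequences of flatness $\Nab_v^2=0$) and the identity $P_v\,\partial_v P_v=0$, to obtain
\[ \frac{\partial f}{\partial v}=s\int_0^\infty t^{s-1}\Str\big(\beta_v(e^{-tL_v}-Q_v)\big)\,dt, \]
paralleling the corresponding computation in Theorem~\ref{thm:inv} with $\al$ replaced by $\beta_v$. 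Passing to the limit $s\to 0$ and invoking the heat kernel expansion gives
\[ \frac{\partial}{\partial v}\log\Big(\frac{\Det'\Nab_{v,\bar0}^\dagger\Nab_{v,\bar0}}{\Det'\Nab_{v,\bar1}^\dagger\Nab_{v,\bar1}}\Big)=\Str\big(\beta_v(a_{\frac{n}{2}}-Q_v)\big), \]
which, combined with the direct analog of Lemma~3.3 of \cite{MW} giving $\partial_v(\eta_{\bar0}\otimes\eta_{\bar1}^{-1})=-\tfrac{1}{2}\Str(\beta_v Q_v)\,\eta_{\bar0}\otimes\eta_{\bar1}^{-1}$, produces the claimed formula. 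The odd-dimensional case then follows at once from the convention $a_{\frac{n}{2}}=0$ of \S\ref{sect:z2}.

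The principal obstacle is the reorganization step above: in Theorem~\ref{thm:inv} only $D^\dagger$ varied, so the entire variation was carried by a single commutator $[\al,D^\dagger]$, whereas here $\Nab_v$ and $\Nab_v^\dagger$ vary through independent commutators with $\beta_v$ and $\beta_v^\dagger$ respectively. The cancellation of the $\beta_v^\dagger$-contributions is expected to proceed by rewriting them, via cyclicity of $\Str$ and the intertwining $\Nab_v^\dagger e^{-t\Nab_v^\dagger\Nab_v}=e^{-t\Nab_v\Nab_v^\dagger}\Nab_v^\dagger$, as a total $t$-derivative whose boundary terms either decay exponentially as $t\to\infty$ or are absorbed into the $\beta_v$ piece; verifying this cancellation carefully, given the non-trivial form degree and non-self-adjointness of $\beta_v$, is the delicate technical point.
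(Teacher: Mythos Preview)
Your strategy is essentially the paper's: compute the commutator variations of $\Nab_v$ and $\Nab_v^\dagger$, then run the zeta-function argument of Theorem~\ref{thm:inv}. The paper, however, disposes of what you call ``the principal obstacle'' in a cleaner, more structural way than the ad hoc cancellation you sketch. Rather than trying to show the $\beta^\dagger$-terms drop out by trace manipulations, the paper observes that $\beta\in\Om(X,\End(\F))^{\bar0}$ splits as
\[
\beta=\beta_{00}+\beta_{11},\qquad
\beta_{00}\in\Om^{\bar0}(X,\End(\F)^{\bar0}),\quad
\beta_{11}\in\Om^{\bar1}(X,\End(\F)^{\bar1}),
\]
and that the component $\beta_{11}$, acting by wedge with a form of positive odd degree, contributes nothing to any trace or supertrace. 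The surviving piece $\beta_{00}$ is even in form degree and block-diagonal on $\F^{\bar0}\oplus\F^{\bar1}$, so the remaining computation is exactly the one already carried out in \cite{MW} (Lemmas~3.5 and~3.7 there, governing the variation of the twisted de~Rham torsion under $H\mapsto H+dB$, which is itself a gauge deformation with $G=e^{B\wedge}$). Invoking those lemmas gives the variation formula and the volume-element identity directly.

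Your phrasing ``the cancellation of the $\beta_v^\dagger$-contributions'' is slightly off: the $\beta^\dagger$-terms do not cancel in isolation. The $\beta$- and $\beta^\dagger$-pieces combine under cyclicity and the intertwining relations, and the reason the final answer is expressed through $\beta$ alone is the form-degree observation above together with the structure of the heat-kernel coefficient $a_{n/2}$, not a separate vanishing of the $\beta^\dagger$-block. If you want to carry out your direct computation, that structural decomposition of $\beta$ is the missing ingredient that makes the reorganization go through cleanly.
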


\begin{proof}
Under the deformation, we have
\[ \frac{\partial\Nab}{\partial v}=[\beta,\Nab],\qquad
\frac{\partial\Nab^\dagger}{\partial v}=-[\beta^\dagger,\Nab^\dagger]. \]
The component of $\beta$ in $\Om^{\bar1}(X,\End(\F)^{\bar1})$ does not
contribute to the trace or supertrace, whereas that in
$\Om^{\bar0}(X,\End(\F)^{\bar0})$ is even in the degree of differential forms.
Following the proofs of Lemmas~3.5 and 3.7 of \cite{MW}, we get the desired
variation formula upon a suitable choice of volume elements and identification
of determinant lines under the deformation; the rest follows easily.
\end{proof}

If $\dim X$ is even, a relative version of analytic torsion
(cf.~\S\ref{sect:rel}) is invariant under gauge equivalence.

\section{Analytic torsion of twisted Dolbeault complexes}\label{sect:dolbeault}

Let $X$ be a connected, closed, compact complex manifold and $\F$,
a holomorphic vector bundle over $X$.
Denote by $\Om^{p,q}(X,\F)$ the space of smooth differential $(p,q)$-forms
on $X$ with values in $\F$.
A holomorphic connection on $\F$ can act on $\Om^{p,q}(X,\F)$ and splits
uniquely as $\partial_\F+\dbar_\F$, where
\[ \partial_\F\colon\Om^{p,q}(X,\F)\to\Om^{p+1,q}(X,\F),\quad
   \dbar_\F\colon\Om^{p,q}(X,\F)\to\Om^{p,q+1}(X,\F)           \]
satisfying $\dbar_\F^2=0$.
This yields the Dolbeault complex of differential forms with values in $\F$.

Let $\Om^{p,\bar0}(X,\F)$, $\Om^{p,\bar1}(X,\F)$ be the space of differential
forms that is of degree $p$ in the holomorphic part and of even, odd degree,
respectively, in the anti-holomorphic part.
Consider a differential form $H\in\Om^{0,\bar1}(X)$ that is $\dbar$-closed,
i.e., $\dbar H=0$.
Let $\dbar_{\F,H}=\dbar_\F+H\wedge\cdot\;$.
We call $H$ a holomorphic flux form and $\dbar_{\F,H}$, the Dolbeault operator
twisted by $H$.
Setting $\dbar_{\bar k}= \bar{\partial}_{\F,H}$ acting on
$\Om^{p,\bar k}(X,\E)$ for $k=0,1$, we have
$\dbar_{\bar1}\dbar_{\bar0}=\dbar_{\bar0}\dbar_{\bar1}=0$ and a $\ZA_2$-graded
elliptic complex, which we call the {\em twisted Dolbeault complex}
\[    \cdots\to\Om^{p,\bar0}(X,\F)\stackrel{\dbar_{\bar0}}{\longrightarrow}
\Om^{p,\bar1}(X,\F)\stackrel{\dbar_{\bar1}}{\longrightarrow}\Om^{p,\bar0}(X,\F)
\stackrel{\dbar_{\bar0}}{\longrightarrow}\Om^{p,\bar1}(X,\F)\to\cdots.    \]
We define the {\it twisted Dolbeault cohomology groups} as
\[ H^{p,\bar0}(X,\F,H)=\ker\,\dbar_{\bar0}/\im\,\dbar_{\bar1},\quad
   H^{p,\bar1}(X,\F,H)=\ker\,\dbar_{\bar1}/\im\,\dbar_{\bar0}.  \]
Like in \cite{RW,MW}, if the degree of $H$ is $3$ or higher, there is a
spectral sequence whose $E_2$-terms are $H^{p,\bullet}(X,\F)$ converging to 
$H^{p,\bullet}(X,\F,H)$.
If $H'$ and $H$ differ by a $\dbar$-exact form, then there are natural
isomorphisms $H^{p,\bullet}(X,\F,H')\cong H^{p,\bullet}(X,\F,H)$.

The above construction is the holomorphic counterpart of the twisted de Rham
complex studied in \cite{RW,MW}.
Holomorphic flux forms arise naturally in a number of prominent situations.
Suppose that $X$ is a Calabi-Yau manifold of an odd complex dimension $n$.
Then the canonical bundle of $X$ is trivial, i.e., there is a nowhere zero
section $\Om$ which satisfies $\partial\Om=0$.
Here $H=\overline\Om\in\Omega^{0,n}(X)$ is $\dbar$-closed.
Another example comes from holomorphic gerbes (or holomorphic sheaves of
groupoids).
The 3-curvature of a holomorphic curving on a holomorphic gerbe on a complex
manifold $X$ is a closed holomorphic $3$-form $\Om$ on $X$
(cf.~\cite{Bry}, 5.3.17 part (4)).
Again, $H=\overline\Om\in\Om^{0,3}(X)$ is $\dbar$-closed.

The twisted Dolbeault complex is also a special $\ZA_2$-graded elliptic
complex $(\E,D)$ with
$\E=\medwedge^p(T^{1,0}X)^*\otimes\medwedge^\bullet(T^{0,1}X)^*\otimes\F$
and $D=\dbar_{\F,H}$.
Suppose $X$ is closed and compact.
Given a Riemannian metric on $X$ and an Hermitian form on $\F$, we have the
analytic torsion of the twisted Dolbeault complex (cf.~\S\ref{sect:defn})
\[   \tau_p(X,\F,H)=\tau(X,\E,D)\in\det H^{p,\bullet}(X,F,H)   \]
with the above choice of $(\E,D)$.
It is satisfies the functorial properties in \S\ref{sect:prop}.
Since $X$ is always of even (real) dimension, only a relative version of
the analytic torsion for the twisted Dolbeault complex is independent of
the metric.
We conclude from Theorem~\ref{thm:rel} the following

\begin{cor}
Let $\F$ be a holomorphic vector bundle over a compact complex manifold $X$.
Suppose $H\in\Om^{0,\bar1}(X)$ is $\dbar$-closed.
For two flat bundles on $X$ given by the representations $\rho_1,\rho_2$ of
$\pi_1(X)$ of the same dimension, the relative twisted holomorphic torsion 
$\tau(X,\F_{\rho_1},H)\otimes\tau(X,\F_{\rho_2},H)^{-1}$ is invariant under any
deformation of $H$ by an $\dbar$-exact form, up to natural identification of
the determinant lines.
\end{cor}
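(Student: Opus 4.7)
The plan is to exhibit the shift $H\mapsto H+\dbar B$ as a gauge equivalence of the twisted Dolbeault operator and then to invoke the relative gauge-invariance in the even-dimensional case alluded to at the end of \S\ref{sect:superconn}. Since the argument runs in parallel with Theorem~\ref{thm:rel}, I would expect the same mechanism --- vanishing of the relative local heat-kernel coefficient $a_{n/2}$ --- to absorb the non-vanishing of the absolute variation that is characteristic of the even-dimensional case.

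Given $B\in\Om^{0,\bar0}(X)$, I would form the interpolating path $H_t=H+t\dbar B$ and the multiplication operators $G_t=e^{tB}$, viewed as pointwise invertible even automorphisms of $\Om^{p,\bullet}(X,\F_{\rho_i})$ for $i=1,2$ and $t\in[0,1]$. Because $B$ has even total degree it is central in the graded algebra of forms, so $\dbar(e^{tB})=t(\dbar B)\cdot e^{tB}$ and a short Leibniz computation yields the exact identity
\[ G_t^{-1}\,\dbar_{\F_{\rho_i},H}\,G_t=\dbar_{\F_{\rho_i},H_t}. \]
This places $H\mapsto H+\dbar B$ inside the gauge-equivalence framework of \S\ref{sect:superconn} and simultaneously supplies the natural identification of the determinant lines $\det H^{p,\bullet}(X,\F_{\rho_i},H_t)\cong\det H^{p,\bullet}(X,\F_{\rho_i},H)$ referred to in the statement.

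I would next transfer the gauge-equivalence variation formula of \S\ref{sect:superconn} into the twisted Dolbeault setting; the derivation uses only the $\ZA_2$-graded ellipticity of the complex and the relation $\partial_tD_t=[\beta,D_t]$, both of which persist here. This should give, for $i=1,2$,
\[ \frac{\partial}{\partial t}\log\tau(X,\F_{\rho_i},H_t)=\Str\big(\beta\,a_{n/2}^{\rho_i}\big),\qquad\beta=G_t^{-1}\partial_tG_t=B, \]
with $a_{n/2}^{\rho_i}$ the diagonal heat-kernel coefficient of the Laplacian of $\dbar_{\F_{\rho_i},H_t}$. Crucially, $\beta=B$ does not depend on $\rho_i$.

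To finish, I would apply the local identification of the two coefficients underlying Proposition~\ref{prop:rel}. The coefficient $a_{n/2}^{\rho_i}(x)$ is a universal combinatorial expression in the jets of the local symbol of the Laplacian (Lemma~1.7.4 of \cite{Gil}), and in any local trivialization identifying the flat bundles $\F_{\rho_1}$ and $\F_{\rho_2}$ with $\CO^m$ the Laplacians agree, so $a_{n/2}^{\rho_1}(x)=a_{n/2}^{\rho_2}(x)$ for every $x\in X$. Hence the two variations coincide and the relative torsion is constant along $H_t$. The main technical obstacle I expect is verifying that the superconnection variation formula of \S\ref{sect:superconn}, derived for de Rham-type complexes, carries over verbatim to the Dolbeault context with the anti-holomorphic grading; once this transfer is secured, the remaining steps are routine.
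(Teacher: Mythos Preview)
Your proposal is correct and rests on the same two ingredients the paper uses: the gauge equivalence $e^{-tB}\,\dbar_{\F_{\rho_i},H}\,e^{tB}=\dbar_{\F_{\rho_i},H_t}$ and the local coincidence $a_{n/2}^{\rho_1}=a_{n/2}^{\rho_2}$ from Proposition~\ref{prop:rel}. The paper, however, packages the argument more economically by deducing the corollary directly from Theorem~\ref{thm:rel}: since $G_t=e^{tB}$ acts pointwise on $\E$, the conjugation $D\mapsto G_t^{-1}DG_t$ is unitarily equivalent (via $G_t$) to keeping $D=\dbar_{\F_{\rho_i},H}$ fixed and replacing the Hermitian form $h$ on $\E$ by $(G_t^{-1})^\dagger hG_t^{-1}$; Theorem~\ref{thm:rel} then applies on the nose. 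This route bypasses the ``technical obstacle'' you flag --- no separate transfer of the \S\ref{sect:superconn} variation formula to the Dolbeault grading is needed --- but the underlying mechanism is identical to yours.
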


For a non-trivial example of twisted holomorphic torsion, consider the compact
Calabi-Yau manifold $T\times M$, where $T$ is a compact complex torus of
dimension $1$ and $M$ is a K3 surface.
Let $\LL=\LL_{u,v}$ be a flat line bundle over $T$ defined by the character
$\chi_{u,v}(m,n)=\exp(2\pi\ii(mu+nv))$, $0\le u,v\le1$, $m,n\in\ZA$.
If $(m,n)\ne(0,0)$, then the Dolbeault cohomology $H^\bullet(T,\LL)$ is
trivial.
Recall the non-trivial holomorphic torsion of $(T,\LL)$ \cite{RS3}
\[  \tau_0(T,\LL)= \left|e^{\pi\ii v^2\tau}
    \frac{\theta_1(u-\tau v,\tau)}{\eta(\tau)}\right|,    \]
where $\tau$ (with $\mathrm{Im}\,\tau>0$) be the complex moduli of $T$.
Here the theta function is defined as 
\[ \theta_1(w,\tau)=-\eta(\tau)e^{\pi\ii(w+\tau/6)}\prod_{k=-\infty}^\infty
                    (1- e^{2\pi\ii(|k|\tau-\epsilon_kw)}),      \]
where $\epsilon_k={\rm sign}\left(k+\frac{1}{2}\right)$ and $\eta(\tau)$ is
the Dedekind eta function.
We still denote by $\LL$ the pull-back of $\LL$ to $T\times M$.
The Dolbeault cohomology groups of $T\times M$ are trivial and so are the
twisted ones. 
Since $\chi(\OO_T(\LL))=0$ and $\chi(\OO_M)=2$, we have \cite{RS3}
\[    \tau_0(T\times M,\LL)=\tau_0(T,\LL)^{\otimes 2}.    \]
Let $H=\bar\al\wedge\bar\lam$, where $\al$ is a holomorphic $1$-form on $T$
and $\lam$ a holomorphic $2$-form on $M$.
By perturbation theory \cite{Fa}, one has,
\[ \tau_0(T\times M,\LL,H)= e^{o(|H|)}\tau_0(T\times M,\LL)
 =e^{o(|H|)}\tau_0(T,\LL)^{\otimes 2},                             \]
where $o(H)\to0$ as $H\to0$.
Therefore $  \tau_0(T\times M,\LL,H)$ non-trivial whenever $|H|$ is
sufficiently small.

\section{Relation to topological field theories}\label{sect:tft}

In \cite{Sch}, a topological field theory of antisymmetric tensor fields
were constructed and the partition function is shown to be equal to the
Ray-Singer analytic torsion.
The metric independence of the torsion is an evidence that the quantized
theory is topological invariant.
In this section, we extend the relation to twisted analytic torsion by
constructing topological field theories that contain a coupling with the
flux form.

Suppose $X$ is a compact, oriented manifold of dimension $n$ and $H$ is
a flux form, a closed differential form of odd degree.
For $k=0$ or $1$, we define a theory whose action is 
\[   S_{\bar k}[B,C]=\int_X B\wedge d_HC,    \]
where $B\in\Om^{\overline{n-k}}(X)$, $C\in\Om^{\bar k}(X)$ are the dynamical
fields.
Since the operator $d_H=d+H\wedge\cdot\,$ is not compatible with the 
$\ZA$-grading, the forms $B,C$ can not be chosen to have fixed degrees. 
Instead, the degrees of $B,C$ have the same parity when $\dim X$ is odd and
opposite parity when $\dim X$ is even.
The classical equations of motion are
\[    d_HC=0,\qquad d_{-H}B=0.   \]
The action $S[B,C]$ and the equations of motion are invariant under a set of
gauge transformations
\[    C\mapsto C+d_HC^{(1)},\qquad B\mapsto B+d_{-H}B^{(1)},    \]
where $B^{(1)},C^{(1)}$ can be any forms whose degrees have opposite parity
with $B,C$, respectively.
The phase space is the space of solutions to the equation of motion modulo the
gauge transformations.
In this case, it is $H^{\overline{n-k}}(X,-H)\oplus H^{\bar k}(X,H)$, expressed
in terms of the de Rham cohomology groups twisted by the fluxes $\pm H$.

To quantize the theory, we consider the partition function
\[   Z_{\bar k}(X,H)=\int\D B\D C\;e^{-S_{\bar k}[B,C]}.           \]
We need to introduce a Riemannian metric on $X$ which determines the
``measures'' $\D B$, $\D C$.
The integration of the transverse parts of $B,C$ yields the determinant
$\Det'(d_H^\dagger d_H)^{-1/2}$ (defined in \S2 of \cite{MW}); that of the zero
modes contributes volume elements on the cohomology groups.
The longitudinal modes of $B,C$ are treated by adding Faddeev-Popov ghost
fields which contribute to determinant factors in the numerator, and there
are secondary and higher ghosts since $B^{(1)},C^{(1)}$ themselves contain
redundancies.

We consider a special case when $\dim X=2l+1$ is odd and $H$ is a top-degree
form (cf.~\S5.1 of \cite{MW}).
If $B,C\in\Om^{\bar1}(X)$, then $B\wedge d_HC=BdC$, and the theory is
equivalent to an untwisted theory.
We now assume that $B,C\in\Om^{\bar0}(X)$.
Then the bosonic determinant from the integration of the transverse modes is
\[  \Det'{d_0^\dagger d_0+H^\dagger H\quad H^\dagger d_{2l}
\choose\quad\;d_{2l}^\dagger H\qquad\;\;d_{2l}^\dagger d_{2l}}^{\!\!-1/2\,}
\prod_{i=1}^{l-1}(\Det'd_{2i}^\dagger d_{2i})^{-1/2},             \]
where $d_i$ is $d$ on $\Om^i(X)$ for $0\le i\le 2l+1$.
The crucial feature in this case is that $H$ does not appear in the gauge
transformations
\[    B\mapsto B+dB^{(1)},\qquad C\mapsto C+dC^{(1)}.    \]
Moreover, we can choose $B^{(1)},C^{(1)}\in\Om^{\bar1}(X)$ to be of degree
$2l-1$ or less.
Further redundancies in $B^{(1)},C^{(1)}$ are described by a hierarchy of
gauge transformations
\[   B^{(i)}\mapsto B^{(i)}+dB^{(i+1)},\qquad
     C^{(i)}\mapsto C^{(i)}+dC^{(i+1)},         \]
where $B^{(i)},C^{(i)}\in\Om^{\bar i}(X)$ are of degree $2l-i$ or less,
for $1\le i\le2l-1$.
The Faddeev-Popov procedure yields the determinant factors
\begin{align*}
&\prod_{i=0}^{2l}\Big[\Det'(d_{2l-i}^\dagger d_{2l-i})
  \Det'(d_{2l-i-2}^\dagger d_{2l-i-2})\cdots\Big]^{(-1)^{i+1}}   \\
&=\prod_{i=0}^l\Det'(d_{2i}^\dagger d_{2i})^{-l/2}
  \prod_{i=0}^{l-1}\Det'(d_{2i+1}^\dagger d_{2i+1})^{(l+1)/2}.
\end{align*}
Taking into account the contribution of the zero modes, the partition
function is
\[ Z_{\bar0}(X,H)=\tau(X,H)^{-1}\otimes\tau(X)^{\otimes(-l)}
   \in\det H^\bullet(X,H)^{-1}\otimes\det H^\bullet(X)^{\otimes(-l)}.    \]
Here $\tau(X)\in\det H^\bullet(X)$ is the classical Ray-Singer torsion
\cite{RS}.
The independence of the partition on the metric indicates that the quantum
theory is also metric independent although it is necessary to use a metric
in the definition.

It would be interesting to construct topological field theories when the
flux form $H$ is of an arbitrary degree, when the manifold has a boundary
\cite{Wu}, and those related to the analytic torsion of other $\ZA_2$-graded
elliptic complexes such as the twisted Dolbeault complex.

\bigskip

\end{document}